\newtheorem{theorem}{Theorem}[section]
\newtheorem{corollary}[theorem]{Corollary}
\newtheorem{lemma}[theorem]{Lemma}
\newtheorem{proposition}[theorem]{Proposition}
\theoremstyle{definition}
\newtheorem{definition}[theorem]{Definition}
\newtheorem{remark}[theorem]{Remark}
\newtheorem{subsec}[theorem]{}
\renewenvironment{proof}{\par\noindent\textbf{Proof:}}{\hfill $\blacksquare$\par\smallskip}
\renewcommand{\b}{\mathrm{b}}
\newcommand{\C}{\mathcal{C}}
\newcommand{\D}{\mathrm{\mathcal{D}}}
\newcommand{\G}{\bar{G}}
\renewcommand{\H}{\mathrm{\mathcal{H}}}
\newcommand{\K}{\mathcal{K}}
\DeclareFontFamily{OT1}{pzc}{}
\DeclareFontShape{OT1}{pzc}{m}{it}{<->s*[1.10] pzcmi7t}{}
\DeclareMathAlphabet{\mathscr}{OT1}{pzc}{m}{it}
\renewcommand{\k}{\mathscr{k}}
\newcommand{\lotimes}{\stackrel{\mathbf{L}}{\otimes}}
\renewcommand{\O}{\mathcal{O}}
\newcommand{\op}{\mathrm{op}}
\newcommand{\Db}[1]{\D^{\b}(#1)}
\newcommand{\Hb}[1]{\H^{\b}(#1)}
\newcommand{\Hom}[3]{\mathrm{Hom}_{#1}(#2,#3)}
\newcommand{\RHom}[3]{\mathrm{\mathbf{R}Hom}_{#1}(#2,#3)}
\newcommand{\para}[1]{\left(#1\right)}
\newcommand{\set}[1]{\left\{#1\right\}}
\newcommand{\triple}[3]{\left(#1,#2,#3\right)}
\newcommand{\titlename}	
{Blockwise relations between triples, and derived equivalences for wreath products}
\newcommand{\shorttitlename}
{Derived equivalences for wreath products}
\newcommand{\authorname}      {Andrei Marcus$^1$ \and Virgilius-Aurelian Minu\cb{t}\u{a}$^2$}
\newcommand{\pdfauthorname}   {Andrei Marcus and Virgilius-Aurelian Minuta}
\newcommand{\shortauthorname} {A. Marcus \and V. A. Minu\cb{t}\u{a}}
\newcommand{\universityname}  {$^{1,2}$Babe\cb{s}-Bolyai University of Cluj-Napoca, Romania}
\newcommand{\facultyname}     {Faculty of Mathematics and Computer Science}
\newcommand{\departmentname}  {Department of Mathematics}
\newcommand{\pmailaddress}        {marcus@math.ubbcluj.ro}
\newcommand{\smailaddress}        {minuta.aurelian@math.ubbcluj.ro}
\newcommand{\articleabstract}{Motivated by the reduction techniques involving character triples for the local-global conjectures, we show that a blockwise relation between module triples is a consequence of a derived equivalence with additional properties. Moreover, we show that this relation is compatible with wreath products.}
\newcommand{\msc}{20C20, 16W50,  20C05, 20E22, 16D90, 16E35}
\newcommand{\keywordterms}{Group algebra, block, Brauer map, graded algebra,  wreath product, derived equivalence}
\titleformat{\section}{\Large\bfseries}{\thesection}{1em}{}
\title[\shorttitlename]{\LARGE{\titlename}}
\newcommand{\institution}{
\universityname\\
\facultyname\\
\departmentname}
\author[\shortauthorname]{\Large{\authorname}
\medskip\\
{\footnotesize \institution\\
$\begin{array}{l}
\text{email}^1\text{: \texttt{\href{mailto:\pmailaddress}{\pmailaddress}}}\\
\text{email}^2\text{: \texttt{\href{mailto:\smailaddress}{\smailaddress}}}
\end{array}$
}}
\begin{document}
\begin{abstract}
\articleabstract\\[0.1cm]
\textsc{MSC 2010.} \msc\\[0.1cm]
\textsc{Key words.} \keywordterms
\end{abstract}
\begingroup
\def\uppercasenonmath#1{} 
\let\MakeUppercase\relax 
\maketitle
\endgroup
\thispagestyle{firstpagestyle}

\section{Introduction}

One of the main approaches to the local-global conjectures in the modular representation theory of finite groups is to show that they are a consequence of some inductive conditions on simple groups. Recent results (see Britta Sp\"ath's surveys \cite{ch:Spath2017} and \cite{ch:Spath2018}) use the language of character triples and of the various relations between them (denoted by $\ge_c$ and $\ge_b$), in order to obtain these reduction theorems.

This paper comes, first, as a followup to the study done in \cite{art:MM2}, in which we have given a  version for module triples of the relation $\ge_c$, and we have proved in \cite[Theorem 6.7]{art:MM2} that it is a consequence of a group graded Rickard equivalence with additional properties. Here, in Definition  \ref{d:rel-b} we also provide a module triple version of the relation $\ge_b$ (see \cite[Definition 4.2]{ch:Spath2018}), and we prove in Proposition \ref{prop:block_order} that  this too is a consequence of a special type of group graded derived equivalences which is compatible in  a certain sense with the Brauer map.

Our second objective is to build group graded derived  and  Rickard equivalences for  wreath products. Some technical details are already developed in \cite{art:MVA2} for Morita equivalences. Such constructions are again motivated by the reduction methods, which require the compatibility of  the relations between character triples and the wreath product constructions (see Sp\"ath \cite[Theorem 5.2]{art:Spath2017} and \cite[Theorem 2.21]{ch:Spath2018}). Theorem  \ref{th:Derived} below improves \cite[Theorem 5.2.12]{book:Marcus1999} in several ways, by taking into account all the additional structure that we deal with. As already noted by Zimmermann \cite{art:Zimm2002}, a certain ``$p'$-condition" on the order of the grading  groups, which appears in \cite[Theorem 5.2.12]{book:Marcus1999}, is actually not needed in the case of derived equivalences, but is needed in the case of Rickard equivalences. Finally, Theorem \ref{th:block_wreath} and Corollary \ref{cor:wreath-rel-b} are the main results of this paper, and establish the compatibility of the relation $\ge_b$ between module triples with wreath products of derived equivalences.

The material is organized as follows: In Section \ref{sec:Preliminaries} we introduce the general notations and we recall from \cite{art:MM2} our basic definitions of a $G$-graded algebra over a $G$-graded $G$-acted algebra $\C$, of a $G$-graded bimodule over $\C$, and of a $G$-graded Morita equivalence over $\C$, where $G$ is a finite group. We also recall from Harris \cite{art:Harris2005,art:Harris2007} some needed facts on the behavior of the Jacobson radical, of centralizers, and of the Brauer map with respect to tensor products. For general concepts and results we refer to \cite{book:Linck2018} and \cite{book:Marcus1999}.

In Section \ref{sec:DerivedWreath} we recall from \cite{art:MVA2} the construction of wreath product for group graded algebras and bimodules over $\C$, and we extend them to chain complexes of $G$-graded bimodules over $\C$
Our main result of this section, Theorem \ref{th:Derived}, says that the wreath product between a chain complex of $G$-graded bimodules over $\C$ and the symmetric group of order $n$, $S_n$, is a complex of $G\wr S_n$-graded bimodules over $\C^{\otimes n}$, and moreover, if the given complex induces a $G$-graded derived (respectively Rickard) equivalence over $\C$, then its wreath product with $S_n$ (respectively a $p'$-subgroup of $S_n$) will induce a group graded derived (respectively Rickard) equivalence over $\C^{\otimes n}$. Our group graded algebras here are block extensions, but it is clear that most of the statements are true for more general group graded algebras.

In Section \ref{sec:ModuleTriples} we recall from \cite{art:MM2} the definitions of a module triple and that of the relation $\ge_c$ between module triples. In Proposition \ref{prop:wreath_c} we prove that the relation $\geq_c$ is compatible with wreath products of $G$-graded derived equivalences over $\C$.

In Section \ref{sec:ModuleTriplesBlocks} we introduce the relation $\ge_b$ between module triples as a refinement of the relation $\ge_c$, by using the Harris-Kn\"orr correspondence (see Definition \ref{d:rel-b}). Note that our definition does not fully cover \cite[Definition 4.2]{ch:Spath2018}, because there block induction in a more general situation is considered. We also introduce in Definition \ref{def:Brauer_derived} a notion of a derived equivalence compatible with the Brauer map. This a a weaker condition that that of a splendid or basic equivalence, and is inspired by the results of \cite{art:Marcus2020}, which connect basic Morita equivalences with the main result of Dade \cite{art:Dade1973}. We prove in  Proposition \ref{prop:block_order} that the relation $\ge_b$ between module triples is a consequence of a certain group graded derived equivalence compatible with the Brauer map. In Theorem \ref{th:block_wreath} and Corollary \ref{cor:wreath-rel-b} we prove that these equivalences, and the relations $\ge_b$ between module triples induced by them are compatible with wreath products.

\section{Preliminaries} \label{sec:Preliminaries}

\begin{subsec}\label{subsec:Hypo} All rings in this paper are associative with identity $1 \neq 0$ and all modules are left (unless otherwise specified) unital and finitely generated. Throughout this article $n$ will represent an arbitrary nonzero natural number.

We consider {a finite group $G$,} a $p$-modular system $(\K,\O,\k)$, where $\O$ is a complete discrete valuation ring, $\K$ is the field of fractions of $\O$  and $\k=\O/J(\O)$ is its residue field. We assume that $\k$ is algebraically closed, and that $\K$ contains all the $|G|$-th roots of unity.
\end{subsec}

\begin{subsec} Let  $N$ be a normal subgroup of $G$, and denote $\G:=G/N$. Note that most results in this paper will use ``$\G$-gradings'', although this is not everywhere needed. The reason is given by the fact that our main applications concern the  $\G$-graded algebra $A=b\O G$, where $b$ is a $\G$-invariant block of $\O N$.
\end{subsec}

We recall from \cite{art:MM2} the following definitions:

\begin{definition} \label{def:defs-GG-alg}
  An $\mathcal{O}$-algebra $\C$ is a \textit{$\G$-graded $\G$-acted algebra} if
 \begin{enumerate}
	\item[(1)] $\C$ is $\G$-graded, and we write $\C=\bigoplus_{\bar{g}\in \bar G}\C_{\bar{g}}$;
	\item[(2)] $\G$ acts on $\C$ (on the left);
	\item[(3)] for all $\bar{g},\bar{h}\in \G$ and for all $c\in \C_{\bar{h}}$ we have  $\tensor*[_{}^{\bar{g}}]{c}{_{}^{}}\in \C_{\tensor*[_{}^{\bar{g}}]{\bar{h}}{_{}^{}}}$.
 \end{enumerate}
\end{definition}

\begin{definition} \label{def:algebraOverC}
Let $\C$ be a $\G$-graded $\G$-acted algebra. We say the $A$ is a $\G$-graded $\mathcal{O}$-algebra over $\C$   if there is a  $\G$-graded $\G$-acted algebra homomorphism
\[\zeta:\C\to C_A(B),\] where $B:=A_1$ and $C_A(B)$ is the centralizer of $B$ in $A$,  i.e. for any $\bar{h}\in \G$ and $c\in \C_{\bar{h}}$, we have $\zeta(c)\in C_A(B)_{\bar{h}}$, and for every $\bar{g}\in\G$, $\zeta(\tensor*[^{\bar{g}}]{c}{})=\tensor*[^{\bar{g}}]{\zeta(c)}{}$.
\end{definition}

\begin{definition}
 Let $A$ and $A'$ be two $\G$-graded crossed products over a $\G$-graded $\G$-acted algebra $\C$, with structure maps $\zeta$ and $\zeta'$, respectively.

{\rm a)}	We say that $\tilde{M}$ is a $\G$-graded $(A,A')$-bimodule over $\C$ if:
	\begin{enumerate}
		\item[(1)] $\tilde{M}$ is an $(A,A')$-bimodule;
		\item[(2)] $\tilde{M}$ has a decomposition $\tilde{M}=\bigoplus_{\bar{g}\in\G}\tilde{M}_{\bar{g}}$ such that $A_{\bar{g}}\tilde{M}_{\bar{x}}A'_{\bar{h}}\subseteq \tilde{M}_{\bar{g}\bar{x}\bar{h}}$, for all $\bar{g}, \bar{x},\bar{h}\in \G$;
		\item[(3)] $\tilde{m}_{\bar{g}} c=\tensor*[^{\bar{g}}]{c}{} \tilde{m}_{\bar{g}}$, for all $c\in \C$, $\tilde{m}_{\bar{g}}\in\tilde{M}_{\bar{g}}$, $\bar{g}\in \G$, where $c\tilde {m} = \zeta(c)\tilde{m}$ and $\tilde{m}c=\tilde{m}\zeta'(c)$, for all $c\in \C$, $\tilde{m}\in\tilde{M}$.
	\end{enumerate}

{\rm b)} $\G$-graded $(A,A')$-bimodules over $\C$ form a category,
where the morphisms between $\G$-graded $(A,A')$-bimodules over $\C$ are just homomorphisms between $\G$-graded $(A,A')$-bimodules.
\end{definition}

\begin{definition}
Let $A$ and $A'$ be two $\G$-graded crossed products over a $\G$-graded $\G$-acted algebra $\C$, and let $\tilde{M}$ be a $\G$-graded $(A,A')$-bimodule over $\C$. Clearly, the $A$-dual $\tilde{M}^{\ast}=\Hom{A}{\tilde{M}}{A}$ of $\tilde{M}$ is a $\G$-graded $(A',A)$-bimodule over $\C$. We say that  $\tilde{M}$ induces a $\G$-graded Morita equivalence over $\C$ between $A$ and $A'$, if $\tilde{M}\otimes_{A'}\tilde{M}^{\ast}\simeq A$ as $\G$-graded $(A,A)$-bimodules and $\tilde{M}^{\ast}\otimes_{A}\tilde{M}\simeq A'$ as $\G$-graded $(A',A')$-bimodules.
\end{definition}

We will need certain properties of tensor products of algebras endowed with group actions or group gradings. We rely on {the results of} Harris \cite{art:Harris2005,art:Harris2007}, which extend the results of K\"ulshammer \cite{art:Ku1993}, and of  Alghamdi and Khammash \cite{art:AK2002}.

Let $A$ and $A'$ be two $\G$-graded crossed products, hence $A\otimes A'$ is a $\G\times \bar G$-graded crossed product with 1-component $B\otimes B'$. We assume from now on, that $A$  and $A'$ are free and finitely generated as $\O$-modules.

\begin{subsec} \label{subsec:TensorJacobson} We start with the graded Jacobson radical.  {By \cite[Proposition 1.5.11]{book:Marcus1999} and \cite[Section 2]{art:Harris2005}} we have{:}
\begin{align*} J_{\mathrm{gr}} (A\otimes A') &= J(B\otimes B') (A\otimes A') \\
&   = (J(B)\otimes B' +B\otimes J(B')) (A\otimes A')  \\
&   = J(B)A\otimes A' + A\otimes J(B')A'  \\
&   = J_{\mathrm{gr}}(A)\otimes A' + A\otimes J_{\mathrm{gr}}(A').
\end{align*}
From this equality it easily follows  that
\[ A\otimes A' /  J_{\mathrm{gr}} (A\otimes A') \simeq  A/ J_{\mathrm{gr}} (A) \otimes A'/J_{\mathrm{gr}}(A').\]
Moreover, { these results imply the following:}
$$\begin{array}{rcl}
J_{\mathrm{gr}} (A^{\otimes n})  = A^{\otimes n}J (B^{\otimes n}) &\text{and} &
A^{\otimes n} / J_{\mathrm{gr}} (A^{\otimes n}) \simeq (A/J_{\mathrm{gr}} (A) )^{\otimes n},
\end{array}$$
{where $A^{\otimes n}:=A\otimes\ldots\otimes A$ ($n$ times) and $B^{\otimes n}$ is its identity component.}
\end{subsec}

\begin{subsec} \label{subsec:TensorCentralizer} Because under our assumptions the Hom functors behave well with respect to tensor products, we have the isomorphisms
\begin{align*}
C_A(B)\otimes C_{A'}(B') &\simeq \mathrm{End}_{A\otimes B^{\mathrm{op}}}(A)^{\mathrm{op}} \otimes \mathrm{End}_{A'\otimes {B'}^{\mathrm{op}}}(A')^{\mathrm{op}} \\
   &\simeq  \mathrm{End}_{(A\otimes A')\otimes (B\otimes B')^{\mathrm{op}}}(A\otimes A')^{\mathrm{op}}  \\
   &\simeq C_{A\otimes A'}(B\otimes B')
\end{align*}
of $\bar G\times \bar G'$-graded $\bar G\times \bar G'$-acted algebras.
\end{subsec}

\begin{subsec} \label{subsec:Brauer_Construction} Finally, for the Brauer construction, if $A$ is a $G$-acted $\mathcal{O}$-algebra and $Q$ is a $p$-subgroup of $G$, by \cite[Section 1]{art:Harris2007}, we have the commutative diagram
\[\xymatrix{
  (A^Q)^{\otimes n} \ar[rr]  \ar_{\mathrm{Br}_Q^{\otimes n}}[d] &  &  (A^{\otimes n})^{Q^n}  \ar^{\mathrm{Br}_{Q^n}}[d] \\
 A(Q)^{\otimes n}  \ar[rr]  &  &    A^{\otimes n}(Q^n)
}\]
of $N_G(Q)^n$-acted algebras, where the horizontal maps are isomorphisms.
\end{subsec}

\section{Derived equivalences for wreath products} \label{sec:DerivedWreath}

Consider the notations from Section \ref{sec:Preliminaries}.

\begin{subsec}
The wreath product $\G\wr S_n$ is the semidirect product $\G^n\rtimes S_n$, where the symmetric group $S_n$  acts on $\G^n$ (on the left) by permuting the components:
\[\tensor*[^{\sigma}]{{(g_1,\ldots,g_n)}}{}:=(g_{\sigma^{-1}(1)},\ldots,g_{\sigma^{-1}(n)}),\]
{for all $g_1,\ldots,g_n\in \G$ and $\sigma \in S_n$.}
{More exactly, the elements of $\G\wr S_n$ are of the form $((g_1,\ldots,g_n),\sigma)$, and the multiplication is:
$$((g_1,\ldots,g_n),\sigma)((h_1,\ldots,h_n),\tau):=((g_1,\ldots,g_n)\cdot \tensor*[^{\sigma}]{{(h_1,\ldots,h_n)}}{},\sigma\tau),$$
for all $g_1,\ldots,g_n,h_1,\ldots,h_n\in \G$ and $\sigma,\tau\in S_n$.}

Similarly, {if $A$ is an $\O$-algebra,} the wreath product $A\wr S_n$ is the skew group algebra
\[A\wr S_n := A^{\otimes n}\otimes \O S_n\]
between $A^{\otimes n}$ and $S_n$, with multiplication
\[\para{\para{a_1\otimes\ldots\otimes a_n}\otimes\sigma}\para{\para{b_1\otimes\ldots\otimes b_n}\otimes\tau} = (a_1b_{\sigma^{-1}(1)}\otimes\ldots\otimes a_nb_{\sigma^{-1}(n)}) \otimes \sigma\tau,\]
{for all $(a_1\otimes\ldots\otimes a_n)\otimes\sigma$, $(b_1\otimes\ldots\otimes b_n)\otimes\tau\in A\wr S_n$.}
\end{subsec}
	{We recall from \cite{art:MVA2}, Lemma 4.3 under the following form:}
\begin{lemma} \label{lemma:WR_algebra}
Let $A$ be a $\G$-graded crossed product over the $\G$-graded $\G$-acted algebra $\C$. The following statements hold:
\begin{enumerate}
	\item [1)] $\C^{\otimes n}$ is a $\G\wr S_n$-acted $\G^n$-graded algebra, where
\[ \tensor*[^{{((g_1,\ldots,g_n),\sigma)}}]{{(c_1\otimes\ldots\otimes c_n)}}{}:=\tensor*[^{{g_{1}}}]{{c_{\sigma^{-1}(1)}}}{}\otimes\ldots\otimes\tensor*[^{{g_{n}}}]{{c_{\sigma^{-1}(n)}}}{}. \]
	\item [2)]  $A\wr S_n$ is a $\G\wr S_n$-graded crossed product over $\C^{\otimes n}$, with $((g_1,\ldots,g_n),\sigma)$-component
\[(A\wr S_n)_{((g_1,\ldots,g_n),\sigma)}:=((A_{g_1}\otimes\ldots\otimes A_{g_n})\otimes\O\sigma),\]
for each $((g_1,\ldots,g_n),\sigma)\in \bar{G}\wr S_n$, and with structural {$\G\wr S_n$-graded $\G\wr S_n$-acted} algebra homomorphism
\[\zeta_{\mathrm{wr}}:\C^{\otimes n}\to C_{A\wr S_n}(B^{\otimes n})\] given by the composition
\[\zeta^{\otimes n}:\C^{\otimes n}\to C_{A}(B)^{\otimes n}\subseteq C_{A\wr S_n}(B^{\otimes n}).\]
\end{enumerate}
\end{lemma}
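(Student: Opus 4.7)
The lemma is a direct verification that the natural constructions on $\C^{\otimes n}$ and $A\wr S_n$ satisfy the axioms of Definitions 2.3 and 2.4, and the plan is to check each axiom in turn. The original argument in \cite{art:MVA2} presumably proceeds the same way.

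For part (1), I would equip $\C^{\otimes n}$ with the canonical $\G^n$-grading $(\C^{\otimes n})_{(h_1,\ldots,h_n)} := \C_{h_1}\otimes\cdots\otimes\C_{h_n}$. First, I would check that the prescribed formula extends $\O$-linearly to a well-defined map $\G\wr S_n\times \C^{\otimes n}\to\C^{\otimes n}$ by algebra automorphisms, and that it satisfies $\tensor*[^{xy}]{c}{} = \tensor*[^{x}]{\para{\tensor*[^{y}]{c}{}}}{}$ for all $x,y\in\G\wr S_n$; this is a direct expansion using the multiplication rule in $\G\wr S_n$ recalled earlier together with the pre-existing $\G$-action on $\C$. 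Axiom (3) of Definition 2.3 — compatibility of action with grading — then reduces to the observation that if $c_j\in\C_{h_j}$ then $\tensor*[^{g_i}]{c_{\sigma^{-1}(i)}}{}\in\C_{\tensor*[^{g_i}]{h_{\sigma^{-1}(i)}}{}}$, which together with the induced $\G\wr S_n$-action on $\G^n$ (permute the indices by $\sigma$, then conjugate coordinatewise by the $g_i$) gives exactly what is required.

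For part (2), I first verify that the prescribed decomposition defines a $\G\wr S_n$-grading on $A\wr S_n$. Using the skew group multiplication rule and the graded multiplication $A_{g}A_{h}\subseteq A_{gh}$ on $A$, a short expansion shows
$$(A\wr S_n)_{((g_1,\ldots,g_n),\sigma)}\cdot (A\wr S_n)_{((h_1,\ldots,h_n),\tau)}\subseteq (A\wr S_n)_{((g_1,\ldots,g_n),\sigma)\cdot((h_1,\ldots,h_n),\tau)},$$
where the product of grading labels is taken in $\G\wr S_n$. The crossed product property is obtained by picking a unit $u_{g_i}\in A_{g_i}$ for each $i$ (which exists because $A$ is a crossed product over $\C$) and observing that $\para{u_{g_1}\otimes\cdots\otimes u_{g_n}}\otimes\sigma$ is a unit in the corresponding component. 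The structural map $\zeta_{\mathrm{wr}} = \zeta^{\otimes n}$ is an algebra homomorphism by functoriality of tensor products; its image lies in $C_A(B)^{\otimes n}$, which embeds into $C_{A\wr S_n}(B^{\otimes n})$ because elements of the form $x_1\otimes\cdots\otimes x_n\otimes 1$ with each $x_i\in C_A(B)$ commute with all $b_1\otimes\cdots\otimes b_n\otimes 1$. The grading- and action-preserving properties of $\zeta_{\mathrm{wr}}$ then reduce factor-by-factor to the corresponding properties of $\zeta$ together with part (1).

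The step that requires the most care is the compatibility of the $\G\wr S_n$-action with the $\G^n$-grading in part (1): the combinatorics of how $\sigma\in S_n$ permutes the tensor factors, while the $(g_1,\ldots,g_n)$-part acts coordinatewise, must be tracked so that both associativity of the action and the matching with the wreath product multiplication hold simultaneously. Once these conventions are fixed correctly, the remaining verifications in part (2) are straightforward tensor manipulations.
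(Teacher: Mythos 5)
The paper does not prove this lemma; it is recalled verbatim from \cite{art:MVA2}, Lemma~4.3. Your direct verification is correct and is the natural argument one would use: the associativity check $\tensor*[^{xy}]{c}{} = \tensor*[^{x}]{(\tensor*[^{y}]{c}{})}{}$ follows from the identity $\tau^{-1}\sigma^{-1} = (\sigma\tau)^{-1}$ together with the wreath-product multiplication rule, the grading axiom for $A\wr S_n$ reduces to $A_{g_i}A_{h_{\sigma^{-1}(i)}}\subseteq A_{g_i h_{\sigma^{-1}(i)}}$, homogeneous units of the form $(u_{g_1}\otimes\cdots\otimes u_{g_n})\otimes\sigma$ give the crossed-product property, and the centralizer, grading, and equivariance conditions on $\zeta_{\mathrm{wr}}=\zeta^{\otimes n}$ all check factor-by-factor (equivariance, in particular, follows by conjugating by such a unit).
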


\begin{subsec} \label{subsec:S_n_gradings} Let $A$ and $A'$ be two $\G$-graded crossed products over the $\G$-graded $\G$-acted algebra $\C$, with identity components $B$ and $B'$ respectively.

If $\tilde{M}$ is an $(A,A')$-bimodule, the action of $S_n$ on $\tilde{M}^{\otimes n}$ is defined by
\[\tensor[^{\sigma}]{\para{\tilde{m}_1\otimes\ldots\otimes \tilde{m}_n}}{}:=\tilde{m}_{\sigma^{-1}(1)}\otimes\ldots\otimes \tilde{m}_{\sigma^{-1}(n)},\]
{for all $\tilde{m}_1,\ldots, \tilde{m}_n\in \tilde{M}$ and $\sigma\in S_n$.}
As an $\O$-module, the wreath product $\tilde{M}\wr S_n$ is
\[\tilde{M}\wr S_n:=\tilde{M}^{\otimes n}\otimes \O S_n.\]

Note that regarding $A\wr S_n$ and $A'\wr S_n$ as $S_n$-graded algebras, we may consider the diagonal subalgebra:
$$\Delta_{S_n}:=\Delta_{S_n}(A\wr S_n\otimes (A'\wr S_n)^{\mathrm{op}}):=(A\wr S_n\otimes (A'\wr S_n)^{\mathrm{op}})_{\delta(S_n)},$$
where $\delta(S_n):=\set{(\sigma,\sigma^{-1})\mid \sigma\in S_n}$. It is easy to see that:
$$\Delta_{S_n}(A\wr S_n\otimes (A'\wr S_n)^{\mathrm{op}})\simeq (A\otimes A'^{\mathrm{op}})\wr S_n,$$
as $\G\wr S_n$-graded algebras, and thus we have that
$$\tilde{M}\wr S_n\simeq (A\wr S_n\otimes (A'\wr S_n)^{\mathrm{op}})\otimes_{\Delta_{S_n}}\tilde{M}^{\otimes n},$$
as $\G\wr S_n$-graded $(A\wr S_n,A'\wr S_n)$-bimodules.
\end{subsec}

We recall  \cite[Theorem 5.3]{art:MVA2}, which extends \cite[Theorem 5.1.21]{book:Marcus1999} to the case of group graded Morita equivalences over a group graded group acted algebra:

\begin{theorem} \label{th:main_of_art_4}
Let $\tilde{M}$ be a $\G$-graded $(A,A')$-bimodule over $\C$, with identity component $M$. Then, the following {statements} hold:
	\begin{enumerate}
		\item [(1)] $\tilde{M}\wr S_n$ is a $\G\wr S_n$-graded $\para{A\wr S_n,A'\wr S_n}$-bimodule over $\C^{\otimes n}$, with scalar multiplication
\begin{align*}
((a_1\otimes\ldots\otimes a_n)\otimes\sigma)&\para{\para{\tilde{m}_1\otimes\ldots\otimes \tilde{m}_n}\otimes\tau}  ((a'_1\otimes\ldots\otimes a'_n)\otimes\pi)    \\
 &= (a_1\otimes\ldots\otimes a_n)\cdot {}^\sigma (\tilde{m}_1\otimes\ldots\otimes \tilde{m}_n)  \cdot {}^{\sigma\tau}(a'_1\otimes\ldots\otimes a'_n) \otimes \sigma\tau\pi,
\end{align*} and with $((g_1,\ldots,g_n),\sigma)$-component
\[(\tilde{M}\wr S_n)_{((g_1,\ldots,g_n),\sigma)}=(\tilde{M}_{g_1}\otimes\ldots\otimes\tilde{M}_{g_n}) \otimes\O\sigma.\]
		\item [(2)] There are isomorphisms of $\G\wr S_n$-graded $\para{A\wr S_n,A'\wr S_n}$-bimodules over $\C^{\otimes n}$:
	\[
	\begin{array}{l}
		f:\para{A\wr S_n}\otimes_{B^{\otimes n}}M^{\otimes n}\to \tilde{M}\wr S_n,\\
		((a_1\otimes\ldots\otimes a_n)\otimes\sigma)\otimes (m_1\otimes\ldots\otimes m_n)\mapsto ((a_1\otimes\ldots\otimes a_n)\cdot {}^\sigma(m_1\otimes\ldots\otimes m_n))\otimes\sigma,
	\end{array}
	\]		
 and
	\[
	\begin{array}{l}
		g:M^{\otimes n}\otimes_{B'^{\otimes n}}\para{A'\wr S_n}\to \tilde{M}\wr S_n,\\
		(m_1\otimes\ldots\otimes m_n)\otimes((a'_1\otimes\ldots\otimes a'_n)\otimes\sigma) \mapsto ((m_1\otimes\ldots\otimes m_n)\cdot (a'_1\otimes\ldots\otimes a'_n))\otimes\sigma. \\
	\end{array}
	\]
		\item [(3)] If $\tilde{M}$ induces a $\G$-graded Morita equivalence over $\C$ between $A$ and $A'$, then $\tilde{M}\wr S_n$ induces a $\G\wr S_n$-graded Morita equivalence over $\C^{\otimes n}$ between $A\wr S_n$ and $A'\wr S_n$.
	\end{enumerate}
\end{theorem}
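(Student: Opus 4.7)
The plan is to verify the three parts in sequence, with parts (2) and (3) leveraging the structural isomorphisms of part (1). The key identification underlying everything is that, since $A$ is a crossed product over $B$, the multiplication map $A\otimes_B M\to \tilde M$ is an isomorphism of $\G$-graded $(A,A')$-bimodules (where $M:=\tilde M_{\bar 1}$), and similarly $\tilde M\simeq M\otimes_{B'}A'$ on the right. Iterating tensor products identifies $\tilde M^{\otimes n}$ with $A^{\otimes n}\otimes_{B^{\otimes n}}M^{\otimes n}$ and with $M^{\otimes n}\otimes_{B'^{\otimes n}}A'^{\otimes n}$.

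For part (1), I would first check that the stated scalar multiplication is well-defined and associative; associativity reduces to the identity $\tensor[^{\sigma}]{(\tilde m\cdot a')}{}=\tensor[^{\sigma}]{\tilde m}{}\cdot \tensor[^{\sigma}]{a'}{}$, which is immediate from the diagonal $S_n$-action on tensor powers. The grading assertion follows by observing that if $\tilde m_i\in \tilde M_{g_i}$ and $\sigma\in S_n$, then right multiplication by $((a'_1\otimes\ldots\otimes a'_n)\otimes\pi)\in (A'\wr S_n)_{((h_1,\ldots,h_n),\pi)}$ lands in the claimed $\G\wr S_n$-component, after the reindexing induced by $\sigma$. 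Finally, the $\C^{\otimes n}$-commutation axiom is checked componentwise from the corresponding property of $\tilde M$ over $\C$, using Lemma \ref{lemma:WR_algebra}(1) to match the $\G\wr S_n$-action on $\C^{\otimes n}$.

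For part (2), the map $f$ is well-defined on the tensor product over $B^{\otimes n}$ by a direct computation using the multiplication rule of $A\wr S_n$, and the inverse is constructed from the crossed-product identification above: the isomorphism $A^{\otimes n}\otimes_{B^{\otimes n}}M^{\otimes n}\simeq \tilde M^{\otimes n}$ combined with $A\wr S_n\simeq A^{\otimes n}\otimes\O S_n$ as $\O$-modules yields, after inserting the diagonal $S_n$-twist, the desired bijection $(A\wr S_n)\otimes_{B^{\otimes n}}M^{\otimes n}\simeq \tilde M\wr S_n$. Verifying bilinearity and gradedness is then routine. The map $g$ is treated symmetrically using the right-handed identification $\tilde M\simeq M\otimes_{B'}A'$.

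For part (3), assume that $\tilde M$ induces a $\G$-graded Morita equivalence over $\C$. Then $\tilde M$ is finitely generated projective on both sides, so duals commute with tensor products, giving $(\tilde M\wr S_n)^{*}\simeq \tilde M^{*}\wr S_n$ as $\G\wr S_n$-graded bimodules over $\C^{\otimes n}$. Combining this with part (2) and the diagonal-subalgebra description from \ref{subsec:S_n_gradings},
\[(\tilde M\wr S_n)\otimes_{A'\wr S_n}(\tilde M^{*}\wr S_n)\simeq (\tilde M\otimes_{A'}\tilde M^{*})^{\otimes n}\otimes\O S_n\simeq A^{\otimes n}\otimes\O S_n=A\wr S_n\]
as $\G\wr S_n$-graded $(A\wr S_n,A\wr S_n)$-bimodules; the symmetric computation yields $A'\wr S_n$ on the other side. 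The main obstacle I anticipate is the bookkeeping in part (1) for the interplay between the twisted permutation action, the $\G\wr S_n$-grading, and the $\C^{\otimes n}$-action; once this is in place, parts (2) and (3) follow by the crossed-product reduction and the naturality of duals.
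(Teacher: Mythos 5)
The paper does not actually prove this theorem: it is recalled verbatim from \cite[Theorem 5.3]{art:MVA2} (itself extending \cite[Theorem 5.1.21]{book:Marcus1999}), so there is no in-text proof to compare against. Your outline follows exactly the route those sources take: the crossed-product identifications $\tilde M\simeq A\otimes_B M\simeq M\otimes_{B'}A'$, the twisted permutation action to handle the grading and the $\C^{\otimes n}$-axiom in part (1), and the description of $\tilde M\wr S_n$ as $(A\wr S_n\otimes (A'\wr S_n)^{\op})\otimes_{\Delta_{S_n}}\tilde M^{\otimes n}$ for parts (2) and (3). The approach is correct. The one step you state but do not justify is the key isomorphism in part (3), namely $(\tilde M\wr S_n)\otimes_{A'\wr S_n}(\tilde M^{*}\wr S_n)\simeq (\tilde M\otimes_{A'}\tilde M^{*})^{\otimes n}\otimes\O S_n$: this is not a formal consequence of part (2) alone, but rests on the compatibility of induction from the diagonal subalgebra with the tensor product over $A'\wr S_n$ (a Mackey-type transitivity statement, cf.\ \cite[Section 5.1]{book:Marcus1999}), and one must also check that the resulting isomorphism respects the $\G\wr S_n$-grading and the $\C^{\otimes n}$-structure. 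If you supply that lemma (or reprove it), your argument is complete and matches the cited proof in substance.
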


\begin{subsec}  Now, if $\tilde{X}$ is a chain complex of {$\G$-graded} $(A,A')$-bimodules {over $\C$} which induces a $\bar G$-graded derived or Rickard equivalence  between $A$ and $A'$, we want to extend the  results of   \cite[Section 5.1.C]{book:Marcus1999},  to obtain  a $\G\wr S_n$-graded derived or Rickard equivalence over $\C^{\otimes n}$ between $A\wr S_n$ and $A'\wr S_n$. In the case of Rickard equivalences, some additional condition will be needed.

{Note that by a derived equivalence we mean an equivalence between the bounded derived categories $\Db{A}$ and $\Db{A'}$ induced by a two-sided tilting complex as in \cite[Section 6.2]{book:KZ1998}, while by a Rickard equivalence, we mean an equivalence between the {bounded} chain homotopy categories $\Hb{A}$ and $\Hb{A'}$ induced by a split endomorphism tilting complex, as presented by Rickard in \cite[Section 9.2.2]{book:KZ1998}; in this case it is essential that $A$ and $A'$ are symmetric algebras.}
\end{subsec}

\begin{subsec} \label{subsec:wreath_complexes} Recall (see, for instance \cite[Section 4.1]{book:Benson1991}) that $S_n$ acts on {$\tilde{X}^{\otimes n}:=\tilde{X}\otimes\ldots\otimes \tilde{X}$ ($n$ times)}. By {\cite[Lemma 5.2.11]{book:Marcus1999}}, this action can be defined as follows\textcolor{red}{:} Denote $C_2=\{\pm1\}$, and observe that $S_n$ acts on the abelian group $\mathrm{Fun}(C_2^n,C_2)$ of functions from $C_2^n$ to $C_2$; for $i\in\mathbb{Z}$ denote also $\hat i=(-1)^i$. Then there is a $1$-cocycle $\epsilon\in Z^1(S_n, \mathrm{Fun}(C_2^n,C_2))$ such that
\[{}^\sigma(x_{i_1}\otimes\dots \otimes x_{i_n})=\epsilon_\sigma(\hat i_1,\dots, \hat i_n)x_{i_{\sigma^{-1}(1)}}\otimes\dots \otimes x_{i_{\sigma^{-1}(n)}},\]
where $x_{i_j}$ belongs to the $j$-th factor of $\tilde X^{\otimes n}$, and has degree $i_j\in\mathbb{Z}$. In our situation, $\tilde X^{\otimes n}$ is a complex of $\bar G^n$-graded $(A^{\otimes n}, {A'}^{\otimes n})$-bimodules  over $\C^{\otimes n}$, and even more, a complex of $\bar G^n$-graded $(A\otimes {A'}^{\mathrm{op}})\wr S_n$-modules.

We may therefore consider the wreath product
\[\tilde X\wr S_n=\tilde X^{\otimes n}\otimes \mathcal{O}S_n .\]
\end{subsec}

\begin{theorem} \label{th:Derived} Let $\tilde{X}$ be a complex of $\G$-graded $(A,A')$-bimodules over $\C$, with identity component $X$. Then, the following statements hold:
\begin{enumerate}
		\item [1)] $\tilde{X}\wr S_n$ is a complex of $\G\wr S_n$-graded $\para{A\wr S_n,A'\wr S_n}$-bimodules over $\C^{\otimes n}$, isomorphic to $\para{A\wr S_n}\otimes_{B^{\otimes n}}X^{\otimes n}$ and to $X^{\otimes n}\otimes_{B'^{\otimes n}}\para{A'\wr S_n}$.
		\item [2)]  If $\tilde{X}$ induces a $\G$-graded derived equivalence  between $A$ and $A'$, then {$\tilde{X}\wr S_n$} induces a $\G\wr S_n$-graded derived equivalence over $\C^{\otimes n}$ between $A\wr S_n$ and $A'\wr S_n$.
		\item [3)]  If $\tilde{X}$ induces a $\G$-graded Rickard equivalence  between $A$ and $A'$, and if $\Sigma$ is a $p'$-subgroup of $S_n$, then $\tilde{X}\wr \Sigma$ induces a $\G\wr\Sigma$-graded Rickard equivalence over $\C^{\otimes n}$ between $A\wr \Sigma$ and $A'\wr \Sigma$.
	\end{enumerate}
\end{theorem}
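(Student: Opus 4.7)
The plan is to derive parts (2) and (3) from their non-graded analogues, namely \cite[Theorem 5.2.12]{book:Marcus1999} together with Zimmermann's refinement in \cite{art:Zimm2002} (which removes the $p'$-hypothesis in the derived setting), while handling part (1) directly by lifting Theorem \ref{th:main_of_art_4} to chain complexes.

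For part (1), I would apply Theorem \ref{th:main_of_art_4}(1) at each degree of the complex, so that each term $\tilde X_i \wr S_n$ acquires the structure of a $\G\wr S_n$-graded $(A\wr S_n, A'\wr S_n)$-bimodule over $\C^{\otimes n}$. The sign-twisted $S_n$-action on $\tilde X^{\otimes n}$ from subsection \ref{subsec:wreath_complexes}, built from the $1$-cocycle $\epsilon$, is precisely what is required for the standard graded-tensor-product differential on $\tilde X^{\otimes n}$ to be $S_n$-equivariant; the induced differential on $\tilde X\wr S_n = \tilde X^{\otimes n}\otimes \O S_n$ then automatically respects the bimodule and grading structure. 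The natural isomorphisms $f$ and $g$ from Theorem \ref{th:main_of_art_4}(2) assemble degree-wise into the required chain isomorphisms.

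For parts (2) and (3), the first step is to show, by induction on $n$, that $\tilde X^{\otimes n}$ induces a $\G^n$-graded derived (respectively Rickard) equivalence over $\C^{\otimes n}$ between $A^{\otimes n}$ and $A'^{\otimes n}$, using stability of derived/Rickard equivalences under outer tensor products of bimodule complexes together with the graded radical and centralizer identifications of subsections \ref{subsec:TensorJacobson} and \ref{subsec:TensorCentralizer}. The second step is to twist by the $S_n$-action: using the isomorphism of subsection \ref{subsec:S_n_gradings} identifying $\tilde X\wr S_n$ with the scalar extension of $\tilde X^{\otimes n}$ from $(A\otimes A'^{\op})^{\otimes n}$ to $\Delta_{S_n}(A\wr S_n\otimes (A'\wr S_n)^{\op})\simeq (A\otimes A'^{\op})\wr S_n$, it suffices to observe that the diagonal $S_n$-action on the equivalence $\tilde X^{\otimes n}$ descends to an equivalence on the wreath products. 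Compatibility with the $\C^{\otimes n}$-structure is ensured by Lemma \ref{lemma:WR_algebra} via the structural homomorphism $\zeta_{\mathrm{wr}} = \zeta^{\otimes n}$.

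The main obstacle will be part (3). Here one must remain within the bounded homotopy category of finitely generated projective bimodules, and the $p'$-hypothesis on $|\Sigma|$ is essential for the descent step: it guarantees that $\O\Sigma$ is a symmetric separable $\O$-subalgebra of $A\wr\Sigma$ well-behaved with respect to relative trace constructions, so that the homotopy equivalence $\tilde X^{\otimes n}\otimes_{A'^{\otimes n}}(\tilde X^{\otimes n})^* \simeq A^{\otimes n}$ can be promoted to a homotopy equivalence of $(A\wr\Sigma, A\wr\Sigma)$-bimodules. In the derived setting of part (2) one can instead pass to projective resolutions, which is exactly Zimmermann's observation removing the need for the $p'$-condition; after this replacement, the $S_n$-equivariant structure on $\tilde X^{\otimes n}$ still yields a two-sided tilting complex for $A\wr S_n$ and $A'\wr S_n$ without any order restriction.
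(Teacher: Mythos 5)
Your proposal is correct and follows essentially the same route as the paper: part (1) by degree-wise extension of Theorem \ref{th:main_of_art_4} together with the sign-twisted $S_n$-action of \ref{subsec:wreath_complexes}, and parts (2)--(3) by first passing to $\tilde X^{\otimes n}$ over $A^{\otimes n}$ and then descending along $\Delta_{S_n}\simeq (A\otimes A'^{\op})\wr S_n$, with the $p'$-condition entering only to average homotopies in the Rickard case while the derived case needs only an equivariant quasi-isomorphism. The paper makes the descent step explicit by taking the $A$-dual (respectively the $\O$-dual) of $\tilde X$ and checking that the canonical (co)unit maps are $(A\otimes A^{\op})\wr S_n$-linear, citing \cite[Theorem 3.4(b)]{art:Marcus1996} for the $p'$-averaging, but this is the same mechanism you describe.
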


\begin{proof} 1) We use the fact that the constructions presented in \ref{subsec:S_n_gradings}, Theorem \ref{th:main_of_art_4} and \ref{subsec:wreath_complexes} are functorial. More precisely, by using {\cite[Lemma 1.6.3]{book:Marcus1999} and }\cite[Proposition 2.11]{art:MM2} we deduce that $(A\wr S_n\otimes (A'\wr S_n)^{\op})\otimes_{\Delta_{S_n}}-$, $A\wr S_n\otimes_{B^{\otimes n}}-$ and $-\otimes_{B'^{\otimes n}}A'\wr S_n$ are naturally isomorphic equivalences of categories, from the category of complexes of $(A\otimes A'^{\op})\wr S_n$-modules {over $\C^{\otimes n}$} to the category of complexes of $\G\wr S_n$ -graded $(A\wr S_n,A'\wr S_n)$-bimodules over $\C^{\otimes n}$.

2) Let $\tilde{Y}=\RHom{A}{\tilde{X}}{A}$ be the $A$-dual of $\tilde{X}$, hence $\tilde{Y}$ is a complex of $\G$-graded $(A',A)$-bimodules over $\C$, by \cite[Proposition 2.12.(2)]{art:MM2}.
By assumption, the canonical map
$$\tilde{X}\lotimes_{A'}\tilde{Y}\to A$$
 of complexes of $\G$-graded $(A,A)$-bimodules is an isomorphism in the derived category $\Db{A\otimes A^{\op}}$, that is, it induces an isomorphism between the homology groups of the above complexes. Consequently, we get a map
$$f:\tilde{X}^{\otimes n}\lotimes_{A'^{\otimes n}}\tilde{Y}^{\otimes n}\to A^{\otimes n}$$
of complexes of $\G^n$-graded $(A^{\otimes n},A^{\otimes n})$-bimodules over $\C^{\otimes n}$, which is a quasi-isomorphism.
Now, by \ref{subsec:wreath_complexes}, $\tilde{X}^{\otimes n}$ extends to a complex of $\Delta_{S_n}\simeq (A\otimes A'^{\mathrm{op}})\wr S_n$-modules. It follows, as in the proof of \cite[Theorem 5.2.5]{book:Marcus1999}, that $\tilde{Y}^{\otimes n}$ extends to a complex of $(A'\otimes A^{\mathrm{op}})\wr S_n$-modules, and that the canonical map $f$ is $(A\otimes A^{\mathrm{op}})\wr S_n$-linear.
Observe that $f$ still induces an isomorphism between homology groups, hence $f$ is an isomorphism in the derived category  $\Db{(A\otimes A^{\op})\wr S_n}$, and it also preserves $\G^n$-gradings.
By \cite[Proposition 2.11]{art:MM2} we deduce that $f$ induces an isomorphism
$$f_{\mathrm{wr}}:\tilde{X}\wr S_n\otimes \tilde{Y}\wr S_n\to A\wr S_n,$$
in the bounded derived category of $\G\wr S_n$-graded $(A\wr S_n,A\wr S_n)$-bimodules. This argument shows that $\tilde{X}\wr S_n$ induces a $\G\wr S_n$-graded derived equivalence over $\C^{\otimes n}$ between $A\wr S_n$ and $A'\wr S_n$.

3) Let $\tilde{Y}$ be the $\O$-dual of $\tilde{X}$, hence $\tilde{Y}$ is a complex of $\G$-graded $(A',A)$-bimodules over $\C$. By assumptions, there are $\G$-grade preserving canonical isomorphisms:
\[h:\tilde{X}\otimes_{A'}\tilde{Y}\to A\qquad \text{ and }\qquad h':A\to \tilde{X}\otimes_{A'}\tilde{Y},\]
in the homotopy category $\Hb{A\otimes A^{\op}}$, inverse of each other. Then,
\[h^{\otimes n}:\tilde{X}^{\otimes n}\otimes_{A'^{\otimes n}}\tilde{Y}^{\otimes n}\to A^{\otimes n}\qquad \text{ and }\qquad h'^{\otimes n}:A^{\otimes n}\to \tilde{X}^{\otimes n}\otimes_{A'^{\otimes n}}\tilde{Y}^{\otimes n},\]
are isomorphisms in the homotopy category of $\G^n$-graded $(A^{\otimes n},A^{\otimes n})$-bimodules. As above, it follows by \ref{subsec:wreath_complexes} that $h^{\otimes n}$ and $h'^{\otimes n}$ are in fact $(A\otimes A^{\op})\wr S_n$-linear. Since $\Sigma$ is a $p'$-subgroup of $S_n$, the final part of the proof of \cite[Theorem 3.4.(b)]{art:Marcus1996} shows that we get the isomorphisms:
\[h\wr \Sigma:\tilde{X}\wr \Sigma\otimes_{A'\wr \Sigma}\tilde{Y}\wr \Sigma\to A\wr \Sigma\qquad \text{ and }\qquad h'\wr \Sigma:A\wr \Sigma\to \tilde{X}\wr \Sigma\otimes_{A'\wr \Sigma}\tilde{Y}\wr \Sigma,\]
in the homotopy category of $\G\wr \Sigma$-graded $(A\wr \Sigma,A\wr \Sigma)$-bimodules. By symmetry, the statement is proved.
\end{proof}

\section{Module triples} \label{sec:ModuleTriples}

\begin{subsec}\label{subsec:ModuleTriples_Relations}  Additionally to the assumptions from \ref{subsec:Hypo}, we will consider $G'$ to be a subgroup of $G$ such that  $G=G'N$ and $C_G(N)\subseteq G'$, and let $N'=G'\cap N$, hence  $\G=G/N\simeq G'/N'$.

Let $b\in Z(\O N)$ and ${b'}\in Z(\O N')$ be two $\G$-invariant block idempotents. We denote
\[ A:=b\O G, \qquad A':={b'}\O G',\qquad B:=b\O N, \qquad B': = {b'}\O N', \]
hence $A$ and $A'$ are  $\G$-graded crossed products, with  1-compo\-nents $B$ and $B'$ respectively. We also have that $A$ and $A'$ are  $\G$-graded algebras over a $\G$-graded $\G$-acted $\O$-algebra $\C=\mathcal{O}C_G(N)$,  with structural maps $\zeta:\C\to C_A(B)$ and $\zeta':\C\to C_{A'}(B')$, as in Definition \ref{def:algebraOverC}, given by inclusion. We denote $$\K B=\K\otimes_{\O}B=(1\otimes b)\K N, \qquad \K B'=\K\otimes_{\O}B'=(1\otimes b')\K N'.$$

Let $V$ be a $G$-invariant simple $\K B$-module and $V'$ be a $G'$-invariant simple $\K B'$-module. In this situation, we say that $(A,B,V)$ is a module triple, and we will consider its endomorphism algebra
\[E(V):=\text{End}_{\K A}\left(\K A\otimes_{\K B}V\right)^{\op}.\]
\end{subsec}

We recall from \cite{art:MM2}, the  relation $\geq_c$ between module triples.

\begin{definition}
Let $\triple{A}{B}{V}$ and $\triple{A'}{B'}{V'}$ be two module triples.  We write $(A,B,V)\geq_c (A',B',V')$ if there exists a $\G$-graded algebra isomorphism
\[E(V)=\mathrm{End}_{\K A}(\K A\otimes_{\K B}V)^{\op}\to E(V')=\mathrm{End}_{\K A'}(\K A'\otimes_{\K B'}V')^{\op}\] such that the diagram
	\[\xymatrix@C+=3cm{
		E(V) \ar@{->}[r]^{\sim} & E(V')\\
	    \K \C  \ar@{->}[u] \ar@{=}[r]^{\mathrm{id}_{\K \C}} & \K \C, \ar@{->}[u]
}\]
of $\G$-graded $\K$-algebras is commutative,	where $\K \C=\K C_G(N)$ is regarded as a $\G$-graded $\G$-acted $\K$-algebra, with 1-component $\K Z(N)$.
\end{definition}

The next result is motivated by  \cite[Theorem 2.21]{ch:Spath2018}.

\begin{proposition} \label{prop:wreath_c} Consider the module triples $(A,B,V)$ and $(A',B',V')$. If $A$ and $A'$ are $\G$-graded derived equivalent over $\C$ such that $V$ corresponds to $V'$, then
\[(A\wr S_n,B^{\otimes n},V^{\otimes n})\geq_c(A'\wr S_n,B'^{\otimes n},V'^{\otimes n}).\]
\end{proposition}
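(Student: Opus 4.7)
The plan is to apply Theorem \ref{th:Derived}(2) to lift the given $\G$-graded derived equivalence over $\C$ between $A$ and $A'$ to a $\G\wr S_n$-graded derived equivalence over $\C^{\otimes n}$ between $A\wr S_n$ and $A'\wr S_n$, and then to verify that the correspondence $V \leftrightarrow V'$ is promoted to a correspondence $V^{\otimes n} \leftrightarrow V'^{\otimes n}$, which in turn produces the required $\G\wr S_n$-graded algebra isomorphism between $E(V^{\otimes n})$ and $E(V'^{\otimes n})$ compatible with $\K\C^{\otimes n}$.

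More concretely, I would fix a complex $\tilde X$ of $\G$-graded $(A,A')$-bimodules over $\C$ realizing the given derived equivalence, with identity component $X$. After base change to $\K$, since $\K A$ and $\K A'$ are semisimple, $\K\tilde X$ is quasi-isomorphic to a $\G$-graded $(\K A,\K A')$-bimodule over $\K\C$ inducing a Morita equivalence; the hypothesis that $V$ corresponds to $V'$ then amounts to an isomorphism $\K X \otimes_{\K B'} V' \simeq V$ of $\K B$-modules. Applying Theorem \ref{th:Derived}(2), the complex $\tilde X\wr S_n$ induces a $\G\wr S_n$-graded derived equivalence over $\C^{\otimes n}$ between $A\wr S_n$ and $A'\wr S_n$, with identity component $X^{\otimes n}$, and taking $n$-th tensor powers of the above isomorphism yields $\K X^{\otimes n} \otimes_{\K B'^{\otimes n}} V'^{\otimes n} \simeq V^{\otimes n}$ as $\K B^{\otimes n}$-modules. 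Evaluating the lifted equivalence functor on the induced modules $\K(A\wr S_n)\otimes_{\K B^{\otimes n}} V^{\otimes n}$ and $\K(A'\wr S_n)\otimes_{\K B'^{\otimes n}} V'^{\otimes n}$ then produces a $\G\wr S_n$-graded $\K$-algebra isomorphism
\[ E(V^{\otimes n}) \to E(V'^{\otimes n}), \]
whose compatibility with the structural maps from $\K\C^{\otimes n}$ should follow, via Lemma \ref{lemma:WR_algebra}(2) (which identifies $\zeta_{\mathrm{wr}}$ with $\zeta^{\otimes n}$), from the analogous compatibility at $n=1$, the latter being itself a consequence of the original derived equivalence over $\C$ after passage to $\K$.

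The main obstacle will be the careful bookkeeping required to simultaneously track the $\G\wr S_n$-grading, the $\C^{\otimes n}$-structure, and the $S_n$-action on tensor powers when assembling the algebra isomorphism between the endomorphism algebras. The functoriality of the wreath product construction established in Theorem \ref{th:Derived}(1), combined with the explicit description of the $S_n$-action in \ref{subsec:S_n_gradings} and the isomorphism $\Delta_{S_n}\simeq (A\otimes A'^{\op})\wr S_n$, should suffice, but verifying that the resulting graded algebra map restricts to the identity on $\K\C^{\otimes n}$ is the most delicate point.
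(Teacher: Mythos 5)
Your overall route coincides with the paper's: wreath the given equivalence via Theorem \ref{th:Derived}(2), observe that $V^{\otimes n}$ corresponds to $V'^{\otimes n}$, and conclude. But there are two points where your write-up falls short of a proof. First, before $\geq_c$ can even be asserted you must check that $(A\wr S_n,B^{\otimes n},V^{\otimes n})$ and $(A'\wr S_n,B'^{\otimes n},V'^{\otimes n})$ are module triples in the sense of \ref{subsec:ModuleTriples_Relations}: that $N^n\trianglelefteq G\wr S_n$, that $G\wr S_n=(G'\wr S_n)(N^n)$ with $N'^n=(G'\wr S_n)\cap N^n$ and $G\wr S_n/N^n\simeq G'\wr S_n/N'^n\simeq\G\wr S_n$, that $b^{\otimes n}$ and $b'^{\otimes n}$ are $\G\wr S_n$-invariant blocks with $A\wr S_n\simeq b^{\otimes n}\O(G\wr S_n)$, and that $V^{\otimes n}$ is a $G\wr S_n$-invariant simple $\K B^{\otimes n}$-module (invariance under the $S_n$-part and simplicity of the tensor power both need a word). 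The paper devotes the first half of its proof to exactly this, and your proposal skips it entirely.

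Second, for the final step the paper does not rebuild the isomorphism $E(V^{\otimes n})\to E(V'^{\otimes n})$ by hand: once the wreath-product data are known to be module triples and $\tilde X\wr S_n$ is known to induce a $\G\wr S_n$-graded derived equivalence over $\C^{\otimes n}$ with $V^{\otimes n}$ corresponding to $V'^{\otimes n}$, it simply applies \cite[Theorem 6.7]{art:MM2} to the new situation, which is precisely the general statement ``graded derived equivalence over the graded acted algebra plus corresponding simple modules implies $\geq_c$.'' You instead sketch a re-proof of that theorem in the wreath-product case and explicitly leave its hardest part --- that the induced isomorphism of endomorphism algebras restricts to the identity on $\K\C^{\otimes n}$ --- as something that ``should follow.'' That is exactly the content of the cited theorem, so as written your argument has a genuine unclosed gap; the fix is simply to invoke \cite[Theorem 6.7]{art:MM2} for the triples over $\C^{\otimes n}$ rather than trying to reconstruct it.
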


\begin{proof} Observe that $(A\wr S_n,B^{\otimes n},V^{\otimes n})\text{ and }(A'\wr S_n,B'^{\otimes n},V'^{\otimes n})$ are module triples. Indeed, $N^n$ is a normal subgroup of $G\wr S_n$, $G'\wr S_n$ is a subgroup of $G\wr S_n$,  $N'^n=(G'\wr S_n)\cap (N^n)$, and $G\wr S_n=(G'\wr S_n) (N^n)$. Moreover, it is easy to see that
\[G\wr S_n/N^n\simeq G'\wr S_n/N'^n\simeq \G\wr S_n.\]
It is also clear that $b^{\otimes n}$ and $b'^{\otimes n}$ are $\G\wr S_n$-invariant block idempotents in $Z(\O N^n)$ and $Z(\O N'^n)$ respectively, and moreover, $A\wr S_n\simeq b^{\otimes n}\O(G\wr S_n)$ and $A'\wr S_n \simeq b'^{\otimes n}\O(G'\wr S_n)$. By Lemma \ref{lemma:WR_algebra} we have that $A\wr S_n$ and $A'\wr S_n$ are strongly $\G\wr S_n$-graded algebras over $\C^{\otimes n}$ with identity components $B^{\otimes n}=b^{\otimes n}\mathcal{O}N^n$ and $B'^{\otimes n}=b'^{\otimes n}\mathcal{O}N'^n$ respectively. Finally, it is straightforward that $V^{\otimes n}$ is a $G\wr S_n$-invariant simple $\K B^{\otimes n}$-module, that $V'^{\otimes n}$ is a $G'\wr S_n$-invariant simple $\K B'^{\otimes n}$-module, and that $V'^{\otimes n}$ corresponds to $V^{\otimes n}$.

Now, by Theorem \ref{th:Derived}, $A\wr S_n$ and $A'\wr S_n$ are $\G\wr S_n$-graded derived equivalent over $\C^{\otimes n}$, hence the conclusion follows by \cite[Theorem 6.7]{art:MM2}.
\end{proof}

\section{Module triples and blocks} \label{sec:ModuleTriplesBlocks}

We keep the notations of the preceding section.

Sp\"ath also considered in \cite{art:Spath2017}, \cite{ch:Spath2017} and \cite{ch:Spath2018} the relation $\ge_b$ between character triples. This relation is a refinement of $\ge_c$, and involves block induction, see \cite[Definition 4.2]{ch:Spath2018}. We show in this section that certain group graded derived equivalences compatible with the Brauer map imply the relation $\ge_b$ between the corresponding triples, and are also compatible with wreath products.

We are going to use the Brauer map and basic equivalences between blocks, introduced by L.~Puig in \cite{book:Puig1997}. Then \cite[Remark 4.3 (c)]{ch:Spath2018} leads us to the following setting.

\begin{definition} \label{d:rel-b} We assume that the block $b$ has defect group $Q$, $G'=N_G(Q)$, $N'=N_N(Q)$, and $b'$ is the Brauer correspondent of $b$. Let $(A,B,V)$ and $(A',B',V')$ be two module triples. We write \[(A,B,V)\geq_b (A',B',V')\] if the following conditions are satisfied:
\begin{enumerate}
\item $(A,B,V)\geq_c (A',B',V')$;
\item For any subgroup $N\le J\le G$, if the simple $\mathcal{O}J$-module $W$ covering $V$ corresponds (via condition {\rm(1)}) to the simple $\mathcal{O}J'$-module $W'$ covering $V'$ (where $J'=G'\cap J$), then the block $\beta$ of $\mathcal{O}J$ to which $W$ belongs is the Harris-Kn\"orr correspondent of the  block $\beta'$ of $\mathcal{O}J'$ to which $W'$ belongs.
\end{enumerate}
\end{definition}

\begin{subsec} Recall that the Harris-Kn\"orr correspondence \cite{art:HK1985} is a bijection between the blocks of $A$ with defect group $D$ (where $Q\le D$) and the blocks of $A'$ with defect group $D$. This bijection in induced by  the Brauer map \[\mathrm{Br}_Q:A^Q\to A(Q)\] (see \cite[Lemma 3.4]{art:Marcus2020} for an alternative proof).
\end{subsec}

\begin{subsec} \label{subsec:G[b]} Denote
\[\bar C =\bar C_A(B)=C_A(B)/J_{\mathrm{gr}}(C_A(B)).\]
we know from \cite[2.9]{art:Dade1973} that $\bar C$ is a $\bar G[b]$-graded crossed product, where
\[\bar G[b]=\{\bar g\in\bar G   \mid A_{\bar g}\simeq B \textrm{ as } (B,B) \textrm{-bimodules}\}=\{ \bar g\in\bar G  \mid  A_{\bar g}A_{\bar g^{-1}}=B\}.\]
Denote also $\bar C' =\bar C_{A'}(B')=C_{A'}(B')/J_{\mathrm{gr}}(C_{A'}(B'))$.

The main result of Dade \cite{art:Dade1973} says that the Brauer map $\mathrm{Br}_Q$ induces an isomorphism $\bar C\simeq \bar C'$ of $\bar G[b]$-graded $\bar G$-acted algebras. Moreover, by \cite[Theorem 3.7]{art:Marcus2020}, this isomorphism induces the same Harris-Kn\"orr correspondence between the blocks of $A$ and the blocks of $A'$.
\end{subsec}

\begin{subsec} \label{subsec:G[b]-2} Recall also from \cite[Corollary 5.2.6]{book:Marcus1999} that a $\bar G$-graded derived equivalence between $A$ and $A'$ induces yet another isomorphism $\bar C\simeq \bar C'$ of $\bar G[b]$-graded $\bar G$-acted algebras.
\end{subsec}

\begin{definition} \label{def:Brauer_derived} We say that a $\bar G$-graded  derived equivalence between $A$ and $A'$ is {\it compatible with the Brauer map} if the induced isomorphism $\bar C\simeq \bar C'$ of $\bar G[b]$-graded $\bar G$-algebras  from \ref{subsec:G[b]-2} coincides with the isomorphism induced by the Brauer map $\mathrm{Br}_Q$  from \ref{subsec:G[b]}.
\end{definition}

\begin{proposition} \label{prop:block_order} Assume that the complex $\tilde X$ induces a $\bar G$-graded derived equivalence between $A$ and $A'$ compatible with the Brauer map $\mathrm{Br}_Q$, such that the simple $\mathcal{K}B$-module $V$ corresponds to the simple $\mathcal{K}B'$-module $V'$. Then $(A,B,V)\ge_b (A',B',V')$.
\end{proposition}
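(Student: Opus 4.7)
The plan is to verify the two conditions of Definition~\ref{d:rel-b} separately. Condition~(1), namely $(A,B,V)\ge_c (A',B',V')$, follows at once from \cite[Theorem~6.7]{art:MM2} applied to the $\G$-graded derived equivalence $\tilde X$ matching $V$ with $V'$; this is the same step used in the proof of Proposition~\ref{prop:wreath_c}.

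For condition~(2), fix an intermediate subgroup $N\le J\le G$, set $J':=G'\cap J$ and $\bar J:=J/N\simeq J'/N'$. Let $W$ be a simple $\O J$-module covering $V$, corresponding via the $\G$-graded isomorphism $E(V)\simeq E(V')$ from condition~(1) to a simple $\O J'$-module $W'$ covering $V'$, and let $\beta$, $\beta'$ denote the blocks of $\O J$, $\O J'$ to which $W$, $W'$ belong. The task is to prove that $\beta$ and $\beta'$ correspond under Harris--Kn\"orr.

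I would combine two inputs. On the one hand, by \cite[Corollary~5.2.6]{book:Marcus1999} (see~\ref{subsec:G[b]-2}) the derived equivalence induced by $\tilde X$ produces an isomorphism $\bar C\simeq \bar C'$ of $\G[b]$-graded $\G$-acted algebras, and by Definition~\ref{def:Brauer_derived} this isomorphism coincides with the one induced by the Brauer map $\mathrm{Br}_Q$ of~\ref{subsec:G[b]}. On the other hand, \cite[Theorem~3.7]{art:Marcus2020} asserts that the Brauer-map isomorphism realizes precisely the Harris--Kn\"orr correspondence between the blocks of $\O J$ covering $b$ and the blocks of $\O J'$ covering $b'$ for every such $J$. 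It therefore suffices to check that $\beta$ is sent to $\beta'$ by the derived-equivalence-induced isomorphism $\bar C\simeq\bar C'$. Since $\beta$ covers $b$, it is a primitive idempotent of $b\O J = A_{\bar J}$ which, modulo the graded Jacobson radical, is determined by a primitive idempotent in the appropriate $\bar J$-graded truncation of $\bar C$ selected by $W$; the correspondence $W\leftrightarrow W'$ comes from $V\leftrightarrow V'$ via the $\G$-graded structure on $E(V)\simeq E(V')$, which is compatible with the induced isomorphism $\bar C\simeq \bar C'$, so the primitive idempotent selecting $\beta$ is mapped to the one selecting $\beta'$.

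The hardest part will be this last compatibility: making precise how $\beta$ is recovered from $W$ as a primitive idempotent in the $\bar J$-graded component of $\bar C$, and verifying that the module correspondence $W\leftrightarrow W'$ transports these idempotents through the derived-equivalence isomorphism. Once this bookkeeping is in place, the Brauer-map compatibility of Definition~\ref{def:Brauer_derived} together with \cite[Theorem~3.7]{art:Marcus2020} close the argument.
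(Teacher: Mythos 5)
Your proposal is correct and follows essentially the same route as the paper: condition (1) via \cite[Theorem 6.7]{art:MM2}, and condition (2) by combining the Brauer-map compatibility of Definition \ref{def:Brauer_derived} with \ref{subsec:G[b]} and \cite[Theorem 3.7]{art:Marcus2020} after restricting to the intermediate subgroup $J$. The only difference is that the paper packages the ``bookkeeping'' you flag as the hard part by truncating the complex itself: by \cite[Corollary 5.2.6]{book:Marcus1999} the truncation $\tilde X_{\bar J}=\oplus_{\bar g\in\bar J}\tilde X_{\bar g}$ induces a $\bar J$-graded derived equivalence between $A_{\bar J}$ and $A'_{\bar J'}$ that is still compatible with $\mathrm{Br}_Q$, which gives the block matching $\beta\leftrightarrow\beta'$ directly without tracking primitive idempotents in $\bar C$ by hand.
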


\begin{proof} The proof of \cite[Theorem 6.7]{art:MM2} works for derived equivalences as well, and shows that $(A,B,V)\ge_c (A',B',V')$. Condition (2) of Definition \ref{d:rel-b} follows from the fact (see \cite[Corollary 5.2.6]{book:Marcus1999}) that the truncated complex $\tilde X_{\bar J}=\oplus_{\bar g\in \bar J}\tilde X_{\bar g}$ induces a $\bar J$-graded derived equivalence between $A_{\bar J}$ and $A'_{\bar J'}$, which is still compatible with the Brauer map $\mathrm{Br}_Q$.
\end{proof}

\begin{remark} By \cite[Corollary 4.4]{art:Marcus2020}, a $\bar G$-graded basic Morita equivalence between $A$ and $A'$  is compatible with the Brauer map $\mathrm{Br}_Q$ in the sense of the above definition.

{Note also that a direct product of Dade $Q$-algebras is also a Dade $Q$-algebra. It follows easily that we get a $\G^n$-graded basic Morita equivalence between $A^{\otimes n}$ and $A'^{\otimes n}$.}

{This in turn,} induces a Morita equivalence between $A\wr S_n\simeq b^{\otimes n}\mathcal{O}(G\wr S_n)$ and $A'\wr S_n\simeq {b'}^{\otimes n}\mathcal{O}(G'\wr S_n)$, by {Theorem \ref{th:main_of_art_4} or \cite[Theorem 5.1.21]{book:Marcus1999}}. However, this equivalence need not be basic (see also \cite[Remark 3.4]{art:Zimm2002}), so we cannot apply the results of \cite{art:Marcus2020} {to deduce} its compatibility with the Brauer map $\mathrm{Br}_{Q^n}$.

Nevertheless, we still have the following result.
\end{remark}

\begin{theorem} \label{th:block_wreath} Assume that the complex $\tilde X$ induces a $\bar G$-graded derived equivalence between $A$ and $A'$ compatible with the Brauer map $\mathrm{Br}_Q$. Then the $\bar G\wr S_n$-graded derived equivalence between $A\wr S_n$ and $A'\wr S_n$ induced by $\tilde X\wr S_n$ is compatible with the Brauer map $\mathrm{Br}_{Q^n}$.
\end{theorem}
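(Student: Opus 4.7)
The plan is to reduce the compatibility statement for the wreath product to a tensor-power compatibility for $\tilde{X}^{\otimes n}$, and then to establish the latter by combining the tensor-product formulas \ref{subsec:TensorJacobson}, \ref{subsec:TensorCentralizer}, with the commutative diagram in \ref{subsec:Brauer_Construction}.

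First I would verify, by a direct computation in $A\wr S_n = A^{\otimes n}\otimes\O S_n$, that $C_{A\wr S_n}(B^{\otimes n})$ has no nonzero homogeneous components in $\G\wr S_n$-degrees with a nontrivial $S_n$-part. For $((g_1,\ldots,g_n),\sigma)$ with $\sigma\ne 1$, the commutation relations force $b_i a_i = a_i b_{\sigma^{-1}(i)}$ for every $b_i\in B$, and varying the $b$'s independently forces each $a_i=0$; consequently
\[C_{A\wr S_n}(B^{\otimes n})=C_{A^{\otimes n}}(B^{\otimes n}),\]
and combining this with \ref{subsec:TensorJacobson} and \ref{subsec:TensorCentralizer} yields
\[\bar{C}_{A\wr S_n}(B^{\otimes n})\simeq \bar{C}^{\otimes n},\qquad \bar{C}_{A'\wr S_n}(B'^{\otimes n})\simeq \bar{C}'^{\otimes n},\]
as $\G\wr S_n[b^{\otimes n}]$-graded $\G\wr S_n$-acted algebras, where $S_n$ acts on both sides by permuting tensor factors.

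Next, I would argue that under these identifications both isomorphisms $\bar{C}^{\otimes n}\simeq \bar{C}'^{\otimes n}$ in \ref{subsec:G[b]} and \ref{subsec:G[b]-2} coincide with the $n$-fold tensor power of the original isomorphism $\bar{C}\simeq \bar{C}'$. For the derived-equivalence side, note that by Theorem \ref{th:Derived}(1), $\tilde{X}\wr S_n$ is $(A\wr S_n\otimes (A'\wr S_n)^{\op})\otimes_{\Delta_{S_n}}\tilde{X}^{\otimes n}$; since the relevant centralizer is concentrated in the trivial $S_n$-component, the isomorphism $\bar{C}^{\otimes n}\to\bar{C}'^{\otimes n}$ induced by $\tilde{X}\wr S_n$ agrees with the one induced by $\tilde{X}^{\otimes n}$ over the $\G^n$-grading, which by the naturality of \cite[Corollary 5.2.6]{book:Marcus1999} with respect to tensor products is $\bar{f}^{\otimes n}$, where $\bar f$ denotes the isomorphism induced by $\tilde{X}$. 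For the Brauer side, the commutative diagram in \ref{subsec:Brauer_Construction} factors $\mathrm{Br}_{Q^n}$ through $\mathrm{Br}_Q^{\otimes n}$, and combined with \ref{subsec:TensorJacobson} and \ref{subsec:TensorCentralizer} this identifies the Brauer-induced isomorphism $\bar{C}^{\otimes n}\to\bar{C}'^{\otimes n}$ from \ref{subsec:G[b]} with the $n$-fold tensor power of the Brauer-induced isomorphism $\bar{C}\to\bar{C}'$. The hypothesis that $\tilde{X}$ is compatible with $\mathrm{Br}_Q$ says these two originals coincide, so their tensor powers coincide, which is exactly the compatibility of $\tilde{X}\wr S_n$ with $\mathrm{Br}_{Q^n}$.

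The main obstacle I expect is the functoriality assertion underlying the third step: namely, verifying that the construction of the isomorphism in \cite[Corollary 5.2.6]{book:Marcus1999} behaves naturally under tensor products of bimodule complexes, and that passing from the $\G^n$-grading to the full $\G\wr S_n$-grading (via the $\Delta_{S_n}$-linear extension of Theorem \ref{th:Derived}(1)) does not alter the induced map on $\bar{C}^{\otimes n}$. Once the construction of that isomorphism is unpacked and shown to commute with both tensor powers and the $S_n$-permutation action, the rest of the argument reduces to chasing the diagrams in \ref{subsec:TensorJacobson}, \ref{subsec:TensorCentralizer}, and \ref{subsec:Brauer_Construction}.
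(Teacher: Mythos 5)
Your overall strategy---reduce the compatibility of $\tilde X\wr S_n$ with $\mathrm{Br}_{Q^n}$ to the corresponding statement for $\tilde X^{\otimes n}$ over the $\bar G^n$-grading, and then tensor the two commutative diagrams coming from \ref{subsec:TensorJacobson}, \ref{subsec:TensorCentralizer} and \ref{subsec:Brauer_Construction}---is exactly the paper's, and your second and third steps are essentially correct. The gap is in your first step: the asserted equality $C_{A\wr S_n}(B^{\otimes n})=C_{A^{\otimes n}}(B^{\otimes n})$ is false in general, and the argument offered for it does not work. The centralizing condition in degree $((g_1,\dots,g_n),\sigma)$ is an equality of tensors,
\[
b_1a_1\otimes\dots\otimes b_na_n \;=\; a_1b_{\sigma^{-1}(1)}\otimes\dots\otimes a_nb_{\sigma^{-1}(n)},
\]
and equality of tensors does not yield the factorwise identities $b_ia_i=a_ib_{\sigma^{-1}(i)}$ (scalars migrate freely between the factors), nor is a general element of that component an elementary tensor. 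Concretely, take $G=N=Q$ a nontrivial abelian $p$-group and $b=1$, so $A=B=\mathcal{O}Q$, and set $\hat Q=\sum_{x\in Q}x$; since $b\hat Q=\hat Qb=\epsilon(b)\hat Q$ is a scalar multiple of $\hat Q$ for every $b\in B$, the nonzero element $\hat Q^{\otimes n}\otimes\sigma$ centralizes $B^{\otimes n}$ for every $\sigma\in S_n$, so the components of $C_{A\wr S_n}(B^{\otimes n})$ in degrees with nontrivial $S_n$-part need not vanish.

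What is true---and all that is needed---is the statement after factoring out the graded radical: $\bar C_{A\wr S_n}(B^{\otimes n})\simeq\bar C_{A^{\otimes n}}(B^{\otimes n})$. This is not an elementary commutation computation. The paper obtains it from Dade's theorem (\ref{subsec:G[b]}) that $\bar C_{A\wr S_n}(B^{\otimes n})$ is a crossed product over the group $(\bar G\wr S_n)[b^{\otimes n}]$, combined with the Brauer construction: since $C_{G\wr S_n}(Q^n)=C_{G^n}(Q^n)$, one gets $(A\wr S_n)(Q^n)\simeq A(Q)^{\otimes n}$, whence $(\bar G\wr S_n)[b^{\otimes n}]\subseteq\bar G^n$, so the offending components are absorbed into the graded radical (note that in the example above $\hat Q^{\otimes n}\otimes\sigma$ is indeed nilpotent modulo $p$). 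Once your first step is replaced by this argument, the rest of your proof proceeds as in the paper; the remaining point you flag---that the isomorphism of \cite[Corollary 5.2.6]{book:Marcus1999} satisfies $\varphi_{\tilde X^{\otimes n}}=\varphi_{\tilde X}^{\otimes n}$ under the identifications of \ref{subsec:TensorJacobson} and \ref{subsec:TensorCentralizer}---is precisely the step the paper also asserts with the least detail.
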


\begin{proof} We have that $A\wr S_n\simeq \mathcal{O}(G\wr S_n)b^{\otimes n}$ is a $G\wr S_n/N^n\simeq \bar G \wr S_n$-graded algebra with $1$-component $B^{\otimes n}$. Consider the group $(\bar G\wr S_n)[b^{\otimes n}]$ defined in \ref{subsec:G[b]}, hence $\bar C_{A\wr S_n}(B^{\otimes n})$ is a $(\bar G\wr S_n)[b^{\otimes n}]$-graded crossed product.

By applying the Brauer construction, we get
\[(\mathcal{O}(G\wr S_n))(Q^n)\simeq kC_{G\wr S_n}(Q^n)=kC_{G^n}(Q^n)\simeq (kC_G(Q))^{\otimes n},\]
and in fact, $(A\wr S_n)(Q^n)\simeq A(Q)^{\otimes n}$. This, together with Dade's result presented in \ref{subsec:G[b]} show that $(\bar G\wr S_n)[b^{\otimes n}]\subseteq \bar G^n$, and therefore
\[\bar C_{A\wr S_n}(B^{\otimes n})\simeq \bar C_{A^{\otimes n}}(B^{\otimes n}).\]
Consequently, it is enough to show that the $\bar G^n$-graded derived equivalence between $A^{\otimes n}$ and ${A'}^{\otimes n}$ {induced} by $\tilde X^{\otimes n}$ is compatible with the Brauer map $\mathrm{Br}_{Q^n}$.

We use that all our constructions behave well with respect to tensor products. First, {as in \ref{subsec:TensorCentralizer},} we have (see also \cite[Proposition 1.4.]{art:Harris2007})
\[C_{A^{\otimes n}}(B^{\otimes n})\simeq \mathrm{End}_{A^{\otimes n}\otimes {B^{\otimes n}}^{\mathrm{op}}}(A^{\otimes n})^{\mathrm{op}} \simeq (\mathrm{End}_{A\otimes B^{\mathrm{op}}}(A)^{\mathrm{op}})^{\otimes n} \simeq C_A(B)^{\otimes n}.\]
Next, by {using \ref{subsec:TensorJacobson},} we have
\begin{align*} \bar C^{\otimes n} & =\bar C_A(B)^{\otimes n} = (C_A(B)/J_{\mathrm{gr}(C_A(B)} )^{\otimes n} \\
  &\simeq C_A(B)^{\otimes n}/ J_{\mathrm{gr}}(C_A(B)^{\otimes n}) \\
  & \simeq C_{A^{\otimes n}}(B^{\otimes n}) / J_{\mathrm{gr}}(C_{A^{\otimes n}}(B^{\otimes n})) = \bar C_{A^{\otimes n}}(B^{\otimes n}).
\end{align*}

Let $\varphi_{\tilde{X}}:\bar{C}_A(B)\to \bar{C}_{A'}(B')$ denote the isomorphism induced by $\tilde{X}$, as in {\cite[Subsection 5.3]{art:MM2}}. {Henceforth}, we get the following commutative diagram of isomorphisms of $\G[b]$-graded $\G$-acted algebras:
\[\xymatrix{
  \bar{C}_A(B)^{\otimes n} \ar[rr]^{\varphi_{\tilde{X}}^{\otimes n}}  \ar_{\simeq}[d] &  &  \bar{C}_{A'}(B')^{\otimes n}  \ar^{\simeq}[d] \\
 \bar{C}_{A^{\otimes n}}(B^{\otimes n})  \ar[rr]_{\varphi_{\tilde{X}^{\otimes n}}}  &  &    \bar{C}_{A'^{\otimes n}}(B'^{\otimes n}).
}\]
By \ref{subsec:Brauer_Construction}, we also have the following commutative diagram of isomorphisms of $\G[b]$-graded $\G$-acted algebras:
\[\xymatrix{
  \bar{C}_A(B)^{\otimes n} \ar[rr]^{\mathrm{Br}_Q^{\otimes n}}  \ar_{\simeq}[d] &  &  \bar{C}_{A'}(B')^{\otimes n}  \ar^{\simeq}[d] \\
 \bar{C}_{A^{\otimes n}}(B^{\otimes n})  \ar[rr]_{\mathrm{Br}_{Q^ n}}  &  &    \bar{C}_{A'^{\otimes n}}(B'^{\otimes n}).
}\]
By our assumptions, the isomorphism $\varphi_{\tilde{X}}$ coincides with the isomorphism given by $\mathrm{Br}_{Q}$. The above two commutative diagrams imply that the isomorphisms $\varphi_{\tilde{X}^{\otimes n}}$ and $\mathrm{Br}_{Q^ n}$ also coincide, hence the equivalence induced by $\tilde{X}^{\otimes n}$ is indeed compatible with the Brauer map.
\end{proof}

From Proposition \ref{prop:wreath_c} and Theorem \ref{th:block_wreath} we immediately deduce:

\begin{corollary} \label{cor:wreath-rel-b} Assume that the complex $\tilde X$ induces a $\bar G$-graded derived equivalence over $\mathcal{C}$ between $A$ and $A'$, and that this equivalence is compatible with the Brauer map $\mathrm{Br}_Q$.
Assume also that the simple $\mathcal{K}B$-module $V$ corresponds to the simple $\mathcal{K}B'$-module $V'$. Then
\[(A\wr S_n,B^{\otimes n},V^{\otimes n})\geq_b(A'\wr S_n,B'^{\otimes n},V'^{\otimes n}).\]
\end{corollary}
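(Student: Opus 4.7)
The plan is to combine the three main pillars already in place: Proposition \ref{prop:wreath_c}, Theorem \ref{th:block_wreath}, and Proposition \ref{prop:block_order}. Concretely, Theorem \ref{th:block_wreath} tells us that the $\bar{G}\wr S_n$-graded derived equivalence between $A\wr S_n$ and $A'\wr S_n$ furnished by $\tilde X\wr S_n$ is compatible with the Brauer map $\mathrm{Br}_{Q^n}$; so once we check that the $(A\wr S_n,B^{\otimes n},V^{\otimes n})$ and $(A'\wr S_n,B'^{\otimes n},V'^{\otimes n})$ fit into the framework of Definition \ref{d:rel-b}, Proposition \ref{prop:block_order} gives us the conclusion directly.

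The first step is to verify that the wreathed data satisfy the hypotheses of Definition \ref{d:rel-b}. From the proof of Proposition \ref{prop:wreath_c} we already know that $N^n\trianglelefteq G\wr S_n$, that $G'\wr S_n\le G\wr S_n$ with $(G'\wr S_n)\cap N^n=N'^n$ and $(G'\wr S_n)\cdot N^n=G\wr S_n$, and that the quotient is $\bar G\wr S_n$. What remains is to observe that $Q^n$ is a defect group of the block $b^{\otimes n}$ of $\O N^n$, that $G'\wr S_n=N_{G\wr S_n}(Q^n)$ and $N'^n=N_{N^n}(Q^n)$, and that ${b'}^{\otimes n}$ is the Brauer correspondent of $b^{\otimes n}$; these facts follow from the standard behaviour of defect groups, normalizers, and Brauer correspondents under tensor products, together with $N_{S_n}(Q^n)=S_n$ and the computation $N_{G^n}(Q^n)=N_G(Q)^n=G'^n$ performed via $C_{G^n}(Q^n)=C_G(Q)^n$, already used inside the proof of Theorem \ref{th:block_wreath}.

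Next, I would invoke Theorem \ref{th:Derived}(2) (or rather the compatibility statement of Theorem \ref{th:block_wreath}) to conclude that $\tilde X\wr S_n$ induces a $\bar G\wr S_n$-graded derived equivalence over $\C^{\otimes n}$ between $A\wr S_n$ and $A'\wr S_n$ which is compatible with $\mathrm{Br}_{Q^n}$. Under this equivalence the simple $\K B^{\otimes n}$-module $V^{\otimes n}$ corresponds to the simple $\K B'^{\otimes n}$-module $V'^{\otimes n}$, because the correspondence between simple modules is transported through the tensor product in the obvious functorial way (this is essentially the content of the last sentence of the proof of Proposition \ref{prop:wreath_c}). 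Applying Proposition \ref{prop:block_order} to the triples $(A\wr S_n,B^{\otimes n},V^{\otimes n})$ and $(A'\wr S_n,B'^{\otimes n},V'^{\otimes n})$ yields $(A\wr S_n,B^{\otimes n},V^{\otimes n})\ge_b(A'\wr S_n,B'^{\otimes n},V'^{\otimes n})$, which is the desired conclusion.

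I expect the only delicate point to be the verification of the defect group / normalizer / Brauer correspondent conditions for $b^{\otimes n}$: one has to know that $N_{G\wr S_n}(Q^n)=N_G(Q)\wr S_n=G'\wr S_n$ and, more importantly, that the Brauer correspondence is multiplicative under tensor products of blocks in the sense that $(b^{\otimes n})'={b'}^{\otimes n}$. Both are standard and can be extracted from \ref{subsec:Brauer_Construction} applied to $G\wr S_n$, but they are the only substantive inputs beyond a bookkeeping combination of the results already proved.
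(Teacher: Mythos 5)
Your proof is correct and follows the same route the paper intends: the paper states the corollary ``immediately'' follows from Proposition \ref{prop:wreath_c} and Theorem \ref{th:block_wreath}, and your argument simply spells out that the hidden linking step is Proposition \ref{prop:block_order}, after verifying that the wreathed data $(Q^n$ as defect group, $G'\wr S_n=N_{G\wr S_n}(Q^n)$, ${b'}^{\otimes n}$ the Brauer correspondent of $b^{\otimes n})$ fit the framework of Definition \ref{d:rel-b}. The only minor imprecision is attributing the normalizer computation to \ref{subsec:Brauer_Construction}; that part is elementary group theory, whereas \ref{subsec:Brauer_Construction} is what gives the multiplicativity of the Brauer correspondence.
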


\begin{remark} We are interested in the relation $\ge_b$ when induced by derived equivalences. However, it is not difficult to show directly, with the methods already used here, that similarly to \cite[Theorem 5.2]{art:Spath2017}, if $(A,B,V)\geq_b (A',B',V')$, then $(A\wr S_n,B^{\otimes n},V^{\otimes n})\geq_b(A'\wr S_n,B'^{\otimes n},V'^{\otimes n})$
\end{remark}

\begin{remark} The following situation is considered in \cite{art:Marcus1996}. Assume that $p\nmid |\bar G|$, $b$ is the principal block of $\mathcal{O}N$, and that $\tilde X$ induces a $\bar G$-graded derived equivalence between $A$ and $A'=\mathcal{O}N_G(Q)b'$, where $b'$ is the principal block of $\mathcal{O}N_N(Q)$ (and of $\mathcal{O}C_N(Q)$). By \cite[Corollary 3.9]{art:Marcus1996}, $\tilde X(Q)$ induces a $C_G(Q)/C_N(Q)$-graded derived autoequivalence of $kC_G(Q)b'$ (which actually lifts to $\mathcal{O}$). It is not difficult to see that this equivalence extends to an $N_G(Q)/C_N(Q)$-graded derived autoequivalence of $A'=\mathcal{O}N_G(Q)b'$. Moreover, the arguments of \cite[Theorem 4.3 and Corollary 4.4]{art:Marcus2020} show that the isomorphism $\bar C\simeq \bar C'$ {induced} by this equivalence coincides with the isomorphism induced by $\mathrm{Br}_Q$.

In order to deal with  arbitrary blocks, one needs to extend the results of \cite{art:Marcus2020} to the case of basic Rickard equivalences. We intend to consider this problem in a subsequent paper.
\end{remark}

\phantomsection

\end{document}